\renewcommand{\baselinestretch}{1.2}
\newtheorem{theorem}{Theorem}[section]
\newtheorem{corollary}{Corollary}[section]
\newtheorem{definition}{Definition}[section]
\newtheorem{conjecture}{Conjecture}
\numberwithin{equation}{section}
\numberwithin{theorem}{section}
\numberwithin{conjecture}{section}
\begin{document}

\title{\bf On the perfect $k$–divisibility of graphs}
\renewcommand\baselinestretch{1}\small

\author{David Scholz
\\ \\
\small Cologne, Germany \\
\small \tt david.scholz@pgrepds.dev
\\
\\
}

\date{\today}

\maketitle

\begin{abstract}
A graph $G$ is perfectly divisible if, for every induced subgraph $H$ of $G$, either $V(H)$ is a stable set or admits a partition into two sets $X_1$ and $X_2$ such that $\omega(H[X_1]) < \omega(H)$ and $H[X_2]$ is a perfect graph.
In this article, we propose the following generalisation of perfectly divisible graphs. A graph $G$ is perfectly $1$-divisible if $G$ is perfect and perfectly $k$-divisible if, for every induced subgraph $H$ of $G$, either $V(H)$ is a stable set or admits a partition into two sets $X_1$ and $X_2$ such that $\omega(H[X_1]) < \omega(H)$ and $H[X_2]$ is perfectly $(k-1)$-divisible, $k \in \mathbb{N}_{> 1}$. Our main result establishes that every perfectly $k$-divisible graph $G$ satisfies $\chi(G) \leq \binom{\omega(G)+k-1}{k}$ which generalises the known bound for perfectly divisible graphs.
\end{abstract}

\renewcommand\baselinestretch{1.2}\normalsize

\section{Introduction}\label{introduction}

All considered graphs are finite, simple, and undirected. We follow standard notation and terminology. Let $G$ be a graph. The \emph{complement} graph of $G$ will be denoted by $G^C$, and the \emph{complete} graph on $n$ vertices by $K_n$. The \emph{clique number} of $G$ will be denoted by $\omega(G)$, the \emph{stability number} by $\alpha(G)$, and the \emph{chromatic number} by $\chi(G)$. Let $v$ be a vertex of $G$. We write $N(v)$ for the neighbours and $M(v)$ for the nonneighbours of $v$ in $G$.

A graph $H$ is an \emph{induced subgraph} of $G$ if $V(H) \subseteq V(G)$ and $(u,v) \in E(H)$ if and only if $(u,v) \in E(G)$ for all $u,v \in V(H)$. If a graph $H$ is isomorphic to an induced subgraph of $G$, then we say that $G$ \emph{contains} $H$. If $G$ does not contain $H$, then $G$ is called $H$-\emph{free}. A class of graphs is called \emph{hereditary} if it is closed under taking induced subgraphs.

A graph $G$ is called \emph{perfect} if $\chi(H)=\omega(H)$ holds for every induced subgraph $H$ of $G$. Gy\'arf\'as \cite{GYARFAS1980} generalised the notion of perfect graphs as follows. A hereditary class $\mathcal{G}$ of graphs is $\chi$-\textit{bounded} if there is a ($\chi$-\emph{binding}) function $f: \mathbb{N} \to \mathbb{N}$ such that $\chi(G) \leq f(\omega(G)), \forall G \in \mathcal{G}$.

A graph $G$ is \textit{perfectly divisible} if, for every induced subgraph $H$ of $G$, either $V(H)$ is a stable set or admits a partition into two sets $X_1$ and $X_2$ such that $\omega(H[X_1]) < \omega(H)$ and $H[X_2]$ is perfect. The class of perfectly divisible graphs is $\chi$-bounded by $\binom{\omega + 1}{2}$ \cite{CHUDNOVSKYDIV2018}. The notion of perfectly divisible graphs was introduced by Ho\`{a}ng \cite{HOANGBANNERFREE2018}.

\section{Perfectly $k$-divisible graphs}\label{perfectKDivisibility}

We propose the following generalisation of perfectly divisible graphs.

\begin{definition}\label{def:perfectKDiv}
A graph $G$ is
    \begin{enumerate}[(i)]
        \item perfectly $1$-divisible if $G$ is perfect,
        \item perfectly $k$-divisible if, for every induced subgraph $H$ of $G$, either $V(H)$ is a stable set or admits a partition into two sets $X_1$ and $X_2$ such that $\omega(H[X_1]) < \omega(H)$ and $H[X_2]$ is perfectly $(k-1)$-divisible, $k \in \mathbb{N}_{>1}$.
    \end{enumerate}
\end{definition}

Definition \ref{def:perfectKDiv} naturally extends to the cases $k=1$ (where $G$ is perfect) and $k=2$ (where $G$ is perfectly divisible). Moreover, it is obvious that a perfectly $k$-divisible graph is also perfectly $(k+1)$-divisible. In the next theorem, we prove that a perfectly $k$-divisible graph is $\chi$-bounded.

\begin{theorem}\label{thm:generalChiBound}
Let $G$ be a perfectly $k$-divisible graph with clique number $\omega$. Then,
    \begin{equation}
        \chi(G) \leq \binom{\omega + k - 1}{k}. \label{chiBoundFormula}
    \end{equation}
\end{theorem}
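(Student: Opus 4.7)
The plan is to proceed by a double induction, with the outer induction on $k$ and the inner induction on $\omega$, exploiting Pascal's identity $\binom{n-1}{k} + \binom{n-1}{k-1} = \binom{n}{k}$ at the inductive step. Two preliminary observations make this clean. First, the class of perfectly $k$-divisible graphs is hereditary: the definition quantifies over every induced subgraph $H$ of $G$, so any induced subgraph $G'$ of $G$ inherits the same universal property. Second, the bound is trivially correct whenever $V(G)$ is a stable set (in which case $\chi(G) \leq 1$).

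For the base case $k=1$, the graph $G$ is perfect, hence $\chi(G) = \omega(G) = \omega = \binom{\omega}{1}$, matching the bound. For the base case $\omega \leq 1$, the graph is edgeless and the bound is immediate. Now fix $k \geq 2$ and $\omega \geq 2$, and assume inductively that the claimed bound holds (a) for every perfectly $(k-1)$-divisible graph, and (b) for every perfectly $k$-divisible graph with clique number strictly less than $\omega$.

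Applying Definition \ref{def:perfectKDiv} to $H = G$ itself (since $V(G)$ is not a stable set when $\omega \geq 2$), we obtain a partition $V(G) = X_1 \cup X_2$ with $\omega(G[X_1]) \leq \omega - 1$ and $G[X_2]$ perfectly $(k-1)$-divisible. By heredity, $G[X_1]$ is perfectly $k$-divisible, so the inner inductive hypothesis yields
\begin{equation*}
\chi(G[X_1]) \;\leq\; \binom{\omega(G[X_1]) + k - 1}{k} \;\leq\; \binom{\omega + k - 2}{k},
\end{equation*}
while the outer inductive hypothesis applied to $G[X_2]$ gives
\begin{equation*}
\chi(G[X_2]) \;\leq\; \binom{\omega(G[X_2]) + k - 2}{k-1} \;\leq\; \binom{\omega + k - 2}{k-1}.
\end{equation*}
Colouring $G[X_1]$ and $G[X_2]$ with disjoint palettes and taking the union yields a proper colouring of $G$, so $\chi(G) \leq \chi(G[X_1]) + \chi(G[X_2])$, and Pascal's identity collapses the two binomials into $\binom{\omega + k - 1}{k}$, closing the induction.

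There is no real obstacle here; the proof is essentially a bookkeeping exercise, and the only subtle point is confirming that the two inductive hypotheses align exactly with the two summands of Pascal's identity, which they do because the partition simultaneously decreases $\omega$ on one side and $k$ on the other. The fact that the argument is structurally parallel to the $k=2$ proof of \cite{CHUDNOVSKYDIV2018} suggests the generalisation is the "right" one.
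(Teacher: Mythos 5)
Your proof is correct and follows essentially the same route as the paper: induction on the pair $(k,\omega)$ in lexicographic order, applying the defining partition to $G$ itself, bounding $\chi(G)$ by $\chi(G[X_1])+\chi(G[X_2])$, and closing with Pascal's identity. Your explicit remarks that the class is hereditary and that the binomial bounds are monotone in the clique number make precise two small steps the paper leaves implicit, but the argument is the same.
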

\begin{proof}
We prove the claim by induction on the pair $(k, \omega) \in \mathbb{N} \times \mathbb{N}$ in lexicographic order. The base cases are $(1, \omega)$ and $(k, 1)$. If $k=1$, then $G$ is perfect and therefore $\omega$-colourable. Hence, $G$ satisfies equation \ref{chiBoundFormula}. If $\omega=1$, then $V(G)$ is a stable set. Hence, $G$ can be coloured with a single colour and again satisfies equation \ref{chiBoundFormula}.
This establishes the base cases. Now, suppose that $G$ is a perfectly $k$-divisible graph with clique number $\omega \geq 2$ and $k \geq 2$. We may assume that every perfectly $k'$-divisible graph $G'$ satisfies equation \ref{chiBoundFormula} for $k' < k$ or $k'=k$ and $\omega(G') < \omega$. Since $G$ is perfectly $k$-divisible, $V(G)$ admits a partition into two sets $X_1$ and $X_2$ such that $\omega(G[X_1]) < \omega$ and $G[X_2]$ is perfectly $(k-1)$-divisible. The pairs $(k, \omega(G[X_1])$ and $(k-1, \omega(G[X_2])$ are strictly smaller than $(k,\omega)$ in the lexicographic order. Hence, the induced subgraphs $G[X_1]$ and $G[X_2]$ satisfy equation \ref{chiBoundFormula} by the induction hypothesis. Consequently, it follows that
    \begin{align}
        \chi(G) 
                &\leq \chi(G[X_1]) + \chi(G[X_2]) \\        
                &\leq \binom{\omega + k - 2}{k} + \binom{\omega + k - 2}{k - 1} \\
                &= \binom{\omega+k-1}{k}
    \end{align}
This completes the proof.
\end{proof}

The $\chi$-binding function known for perfectly divisible graphs is included in theorem \ref{thm:generalChiBound} for the case $k=2$. In the next theorem, we prove a structural property of perfectly $k$-divisible graphs.

\begin{theorem}\label{thm:hcupk1}
If every $H$-free graph is perfectly $(k-1)$-divisible, then every $(H \cup K_1)$-free graph is perfectly $k$-divisible, $k \in \mathbb{N}_{>1}$.
\end{theorem}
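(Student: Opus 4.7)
The plan is to verify Definition \ref{def:perfectKDiv}(ii) directly. Let $G$ be $(H\cup K_1)$-free; because this class is hereditary, every induced subgraph of $G$ is itself $(H\cup K_1)$-free, so it suffices to exhibit, for any induced subgraph $G'$ of $G$ whose vertex set is not a stable set, a partition of $V(G')$ with the required properties. Fix such a $G'$, pick any vertex $v \in V(G')$, and set
\[
X_1 = N_{G'}(v), \qquad X_2 = M_{G'}(v)\cup\{v\}.
\]

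The bound $\omega(G'[X_1]) < \omega(G')$ is immediate: any clique inside $N_{G'}(v)$ is enlarged by $v$ to a strictly larger clique of $G'$, so $\omega(G'[N_{G'}(v)]) \le \omega(G') - 1$. For the second requirement, the key observation is that $G'[M_{G'}(v)]$ is $H$-free, because an induced copy of $H$ inside $M_{G'}(v)$ together with $v$ (which is nonadjacent to all of $M_{G'}(v)$) would yield an induced $H\cup K_1$ in $G'$. By hypothesis, $G'[M_{G'}(v)]$ is therefore perfectly $(k-1)$-divisible. Since $v$ is isolated in $G'[X_2]$, this subgraph is precisely the disjoint union $G'[M_{G'}(v)] \cup K_1$.

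The remaining step, which I view as the main obstacle, is to show that adjoining an isolated vertex preserves perfect $k$-divisibility; applied to $G'[M_{G'}(v)]$ it will give that $G'[X_2]$ is perfectly $(k-1)$-divisible and close the argument. I would establish this as a short auxiliary lemma by induction on $k$. The base case $k = 1$ is the standard fact that the disjoint union of a perfect graph with $K_1$ is perfect. For the inductive step, consider any induced subgraph $H'$ of $F\cup K_1$ with $w$ denoting the extra vertex: if $w \notin V(H')$ then $H'$ is an induced subgraph of $F$, handled by the $k$-divisibility of $F$; otherwise $H' = F'\cup\{w\}$ for some induced $F'\subseteq F$ with $w$ isolated, and if $V(H')$ is not stable I would take a partition $X_1', X_2'$ witnessing $k$-divisibility for $F'$, keep $X_1'$ unchanged, and attach $w$ to $X_2'$, invoking the induction hypothesis on $F'[X_2']\cup K_1$ to conclude it remains perfectly $(k-1)$-divisible. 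This makes the partition for $H'$ work and, fed back into the main argument, finishes the verification for $G$.
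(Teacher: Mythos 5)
Your proof is correct, and it uses the same core decomposition as the paper: pick a vertex $v$, put $N(v)$ on one side and $M(v)\cup\{v\}$ on the other, and observe that $G[M(v)]$ must be $H$-free. There are two points where your write-up diverges. First, you dispense with the paper's induction on $\omega$: the paper uses that induction to conclude that $G[X_1]$ is perfectly $k$-divisible, but Definition~\ref{def:perfectKDiv} only asks for $\omega(H[X_1])<\omega(H)$, so, as you noticed, a direct verification of the definition on each non-stable induced subgraph suffices and the induction on $\omega$ is not needed. Second, and more substantively, you isolate and prove the auxiliary lemma that adjoining an isolated vertex preserves perfect $j$-divisibility. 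The paper establishes that $G[X_2]$ is perfectly $(k-1)$-divisible and then declares $X_1$ and $X_2\cup\{v\}$ to be the desired partition, leaving implicit the step from $G[X_2]$ to $G[X_2\cup\{v\}]$; note that $G[X_2\cup\{v\}]$ need not itself be $H$-free (e.g.\ when $H$ has an isolated vertex), so one really does need a closure property of the divisibility notion rather than another appeal to the hypothesis. Your induction on $j$, with the base case being that a perfect graph plus an isolated vertex is perfect and the inductive step attaching the new vertex to the $X_2$-side of a witnessing partition, fills this in cleanly. So your argument is essentially the paper's, made slightly leaner in one place and more complete in another.
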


\begin{proof}
Let $G$ and $H$ be graphs such that $G$ is $(H \cup K_1)$-free and let $k \in \mathbb{N}_{>1}$. We assume that every $H$-free graph is perfectly $(k-1)$-divisible and claim that $G$ is perfectly $k$-divisible. We prove this claim by induction on $\omega=\omega(G)$. The base case is trivial. So we may assume that $\omega \geq 2$ and that every $(H \cup K_1)$-free graph $G'$ with $\omega(G') < \omega$ is perfectly $k$-divisible. Let $v \in V(G)$ and $X_1=N(v)$ and $X_2=M(v)$. Since $G$ is $(H \cup K_1)$-free, $G[X_2]$ is $H$-free. Therefore, $G[X_2]$ is perfectly $(k-1)$-divisible. Since $v$ is adjacent to every vertex in $X_1$, $\omega(G[X_1]) < \omega$ holds. Thus, $G[X_1]$ is perfectly $k$-divisible by the induction hypothesis. Therefore, $X_1$ and $X_2 \cup \{v\}$ is the desired partition.
\end{proof}

Ho\`{a}ng and McDiarmid \cite{HOANGDIVISIBILITY2002} proved that a graph $G$ with stability number $\alpha$ is $\chi$-bounded by $\binom{\alpha + \omega - 1}{\alpha}$. In the next two corollaries, we establish the same $\chi$-bound.

\begin{corollary}\label{cor:stabilityNumber}
A graph with stability number $\alpha$ is perfectly $\alpha$-divisible.
\end{corollary}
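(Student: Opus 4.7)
The plan is to proceed by induction on $\alpha$, with Theorem \ref{thm:hcupk1} doing almost all the work. The key observation is a dictionary: a graph $G$ satisfies $\alpha(G) \leq j$ if and only if $G$ is $(j+1)K_1$-free, where $jK_1$ denotes the edgeless graph on $j$ vertices. Rewriting the target class in this form exposes the disjoint $K_1$ needed to invoke Theorem \ref{thm:hcupk1}.

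For the base case $\alpha = 1$, every graph with stability number one has no pair of non-adjacent vertices and is therefore complete, hence perfect, and hence perfectly $1$-divisible by Definition \ref{def:perfectKDiv}(i).

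For the inductive step, I would assume that every graph with stability number at most $\alpha - 1$ is perfectly $(\alpha - 1)$-divisible. Because the $(\alpha K_1)$-free graphs are precisely the graphs with stability number at most $\alpha - 1$, this hypothesis is exactly the premise of Theorem \ref{thm:hcupk1} with $H = \alpha K_1$ and with the theorem's $k$ set to $\alpha$. Its conclusion then yields that every $(\alpha K_1 \cup K_1)$-free graph, i.e.\ every $(\alpha + 1)K_1$-free graph, i.e.\ every graph with stability number at most $\alpha$, is perfectly $\alpha$-divisible; this closes the induction and in particular covers the case $\alpha(G) = \alpha$.

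I do not expect a genuine obstacle in this argument: the entire proof is a clean application of Theorem \ref{thm:hcupk1} on top of the trivial base case, and the only step that requires real care is keeping the indices on $\alpha$ and $k$ aligned when translating between bounds on the stability number and forbidden disjoint unions of $K_1$'s.
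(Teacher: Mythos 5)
Your proof is correct and is essentially the paper's own argument: the paper also inducts on $\alpha$, identifies graphs with stability number at most $\alpha$ as the $K_{\alpha+1}^C$-free graphs (your $(\alpha+1)K_1$-free graphs), and applies Theorem \ref{thm:hcupk1} with $H = K_\alpha^C = \alpha K_1$ and $k = \alpha$. The only difference is notational, plus your explicit treatment of the base case, which the paper dismisses as trivial.
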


\begin{proof}
Let $G$ be a graph with stability number $\alpha$. In other words, the graph $G$ is $K_{\alpha+1}^C$-free. We prove the claim by induction on $\alpha$. The base case is trivial. So we may assume that $\alpha \geq 2$ and that every $K_{\alpha}^C$-free graph is perfectly $(\alpha - 1)$-divisible by the induction hypothesis. Since $G$ is $K_{\alpha+1}^C$-free, $G$ is $(K_{\alpha}^C \cup K_1)$-free. Hence, $G$ is perfectly $\alpha$-divisible by theorem \ref{thm:hcupk1}.
\end{proof}

\begin{corollary}
Let $G$ be a graph with clique number $\omega$ and stability number $\alpha$. Then, \begin{equation}
\chi(G) \leq \binom{\omega + \alpha - 1}{\alpha}.
\end{equation}
\end{corollary}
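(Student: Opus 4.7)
The plan is to derive this corollary as an immediate consequence of the two results already established in the paper, with essentially no new work required. The statement is the natural composition of Corollary~\ref{cor:stabilityNumber} and Theorem~\ref{thm:generalChiBound}.

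First I would invoke Corollary~\ref{cor:stabilityNumber} applied to $G$: since $G$ has stability number $\alpha$, it is perfectly $\alpha$-divisible. Then I would apply Theorem~\ref{thm:generalChiBound} with $k = \alpha$, which gives directly
\[
\chi(G) \leq \binom{\omega + \alpha - 1}{\alpha}.
\]
This is exactly the desired inequality, so no further algebraic manipulation is needed.

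There is no real obstacle here; the only thing to verify is that the hypotheses line up, namely that the $k$ in Theorem~\ref{thm:generalChiBound} is allowed to be taken as $\alpha$ (which is clear since $\alpha \in \mathbb{N}$ and the theorem is stated for arbitrary $k$), and that the binomial coefficient in the theorem, evaluated at $k = \alpha$, matches the one claimed in the corollary. Both checks are immediate, so the proof amounts to citing the two previous results in sequence.
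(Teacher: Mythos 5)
Your proposal is correct and is exactly the paper's own argument: the paper's proof is simply to cite Corollary~\ref{cor:stabilityNumber} to conclude that $G$ is perfectly $\alpha$-divisible and then apply Theorem~\ref{thm:generalChiBound} with $k=\alpha$. Nothing further is required.
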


\begin{proof}
Follows from corollary \ref{cor:stabilityNumber} and theorem \ref{thm:generalChiBound}.
\end{proof}

\section{Remarks}\label{remarks}

The graph \textit{union} $G \cup H$ of two graphs $G$ and $H$ has vertex set $V(G) \cup V(H)$ and edge set $E(G) \cup E(H)$. A $2K_2$-free graph is a graph that does not contain a $K_2 \cup K_2$ as an induced subgraph. Wagon \cite{WAGON} proved that the class of the $2K_2$-free graphs is $\chi$-bounded by $\binom{\omega + 1}{2}$. The question whether the $2K_2$-free graphs are perfectly divisible is still open.

A superclass of the $2K_2$-free graphs is the class of the $(P_n \cup P_2)$-free graphs, $n\in \{3,4\}$. Bharathi and Choudum \cite{BHARATHI2018} proved that this class is $\chi$-bounded by $\binom{\omega + 2}{3}$. Consequently, the following conjecture arises naturally.

\begin{conjecture}\label{conj1}
Every $(P_n \cup P_2)$-free graph is perfectly $3$-divisible, $n \in \{3,4\}$.
\end{conjecture}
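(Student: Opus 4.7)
The plan is to show that every $(P_n \cup P_2)$-free graph $G$ that is not a stable set admits a partition $V(G) = X_1 \cup X_2$ with $\omega(G[X_1]) < \omega(G)$ and $G[X_2]$ perfectly $2$-divisible; since the class is hereditary and this partition statement is exactly what Definition \ref{def:perfectKDiv} requires for each induced subgraph, proving it once for an arbitrary $G$ in the class suffices. In the easy case that $G$ is itself $P_n$-free, no partition is needed: for $n=3$ the graph $G$ is a disjoint union of cliques and for $n=4$ it is a cograph, so $G$ is perfect and hence perfectly $1$-divisible, which by the monotonicity remark following Definition \ref{def:perfectKDiv} implies perfectly $3$-divisible.

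Now assume $G$ contains an induced $P_n$, and fix any edge $uv$ of $G$. The structural heart of the argument is the observation that $M(u) \cap M(v)$ induces a $P_n$-free subgraph: any induced $P_n$ inside $M(u) \cap M(v)$ would, together with the edge $uv$, yield an induced $P_n \cup P_2$ in $G$. Consequently $G[\{u,v\} \cup (M(u) \cap M(v))]$ is the disjoint union of the edge $uv$ with a $P_n$-free graph, so it is perfect and in particular perfectly $2$-divisible. This suggests the candidate partition $X_2 := \{u,v\} \cup (M(u) \cap M(v))$ and $X_1 := (N(u) \cup N(v)) \setminus \{u,v\}$, and the problem reduces to choosing the edge $uv$ so that no maximum clique of $G$ lies inside $X_1$.

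The hardest step is excluding the bad case in which some maximum clique $K'$ of $G$ is disjoint from $\{u,v\}$ and satisfies $K' \subseteq N(u) \cup N(v)$. I would attempt to rule this out by picking $uv$ inside a fixed maximum clique $K$ of $G$ and using $(P_n \cup P_2)$-freeness to restrict the bipartite adjacency pattern between $K \setminus \{u,v\}$ and $K'$, deriving either a contradiction or an alternative choice of edge. If the obstruction persists for every edge of $K$, the plan is to refine the partition by moving additional vertices from $X_1$ into $X_2$ while preserving perfection of $G[X_2]$, exploiting the module-decomposition structure of cographs for $n=4$ and the disjoint-union-of-cliques structure for $n=3$.

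The main reason the conjecture is not already implied by Theorem \ref{thm:hcupk1} is that the extra component in the forbidden subgraph is $P_2 = K_2$ rather than $K_1$, so any useful ``peel'' must remove two adjacent vertices simultaneously, and such a peel does not automatically reduce the clique number. Bridging this gap, and more specifically ruling out a maximum clique of $G$ that is dominated by the two endpoints of the peeled edge, is the step I expect to be the central obstacle and the source of whatever case analysis the proof ultimately requires.
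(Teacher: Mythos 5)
This statement is an open conjecture in the paper; there is no proof there to compare against, so your attempt has to stand on its own, and as written it does not: it is a proof plan whose decisive step is explicitly left unresolved. The parts you do carry out are fine. The class is hereditary, so it suffices to produce the partition for an arbitrary non-stable $(P_n \cup P_2)$-free graph $G$; if $G$ is $P_n$-free it is a disjoint union of cliques ($n=3$) or a cograph ($n=4$), hence perfect; and for any edge $uv$ the set $X_2 = \{u,v\} \cup (M(u) \cap M(v))$ induces the disjoint union of the edge $uv$ with a $P_n$-free graph, which is perfect and therefore perfectly $2$-divisible. All of that is correct.

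The gap is the requirement $\omega(G[X_1]) < \omega(G)$ for $X_1 = (N(u) \cup N(v)) \setminus \{u,v\}$, which is exactly where you stop. If a maximum clique $K'$ avoids $u$ and $v$, then maximality forces $K'$ to meet both $N(u) \setminus N(v)$ and $N(v) \setminus N(u)$, but $(P_n \cup P_2)$-freeness yields no local contradiction from this: taking $a \in K' \cap N(u) \setminus N(v)$ and $b \in K' \cap N(v) \setminus N(u)$ gives an induced $P_3$ on $\{u,a,b\}$, and to extend it to a forbidden $P_n \cup P_2$ you would need an edge anticomplete to these vertices, which need not exist. Your fallback of ``moving additional vertices from $X_1$ into $X_2$ while preserving perfection'' is only gestured at, and no rule for choosing the edge $uv$ is given. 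Note that this is essentially the same obstruction that keeps the perfect divisibility of $2K_2$-free graphs open (as the paper remarks in Section \ref{remarks}): there too the natural move is to peel $N(u) \cup N(v)$ for an edge $uv$, and the whole difficulty is a maximum clique dominated by $\{u,v\}$ without containing either endpoint. Your plan therefore reduces the conjecture to a known hard case rather than resolving it; the Bharathi--Choudum bound $\binom{\omega+2}{3}$ is obtained by a different, more global counting argument and does not supply the clique-reducing partition you need.
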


Both the $2K_2$-free graphs and the $(P_n \cup P_2)$-free graphs share a key property that is instrumental in the proofs establishing their $\chi$-binding functions. For any graph $G$ in either class, if $X \subset V(G)$ is a subset for which there exists an edge $e$ (with $e \cap X = \emptyset$) such that no vertex in $X$ is adjacent to any vertex of $e$, then the subgraph induced by $X$ is perfect. Motivated by this result and the structural property established in theorem \ref{thm:hcupk1}, the following conjecture arises naturally.

\begin{conjecture}\label{conj2}
If every $H$-free graph is perfectly $(k-2)$-divisible, then every $(H \cup K_2)$-free graph is perfectly $k$-divisible, $k \in \mathbb{N}_{>2}$.
\end{conjecture}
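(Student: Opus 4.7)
The plan mirrors the proof of Theorem~\ref{thm:hcupk1} with the single vertex replaced by an edge. First I would establish an \emph{addition lemma}: if $G'$ is perfectly $m$-divisible for some $m\geq 1$, then the disjoint union $G' \cup K_2$ is also perfectly $m$-divisible. The base $m=1$ is immediate from closure of perfect graphs under disjoint union; for $m\geq 2$, an induced subgraph $H'$ of $G'\cup K_2$ is partitioned by taking the existing $m$-divisibility partition of $V(H')\cap V(G')$ and absorbing whichever of the two extra vertices belong to $V(H')$ into the perfectly $(m-1)$-divisible side, using the inductive hypothesis on $m$.

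The main argument proceeds by induction on $\omega(G)$. The base $\omega(G)\leq 1$ is trivial. For $\omega(G)\geq 2$, pick any edge $uv$ of $G$ and set $S = M(u)\cap M(v)$. An induced copy of $H$ inside $G[S]$ together with the edge $uv$ would produce an induced $H\cup K_2$ in $G$; hence $G[S]$ is $H$-free and by hypothesis perfectly $(k-2)$-divisible. The proposed partition is
\[
X_1 = (N(u)\cup N(v))\setminus\{u,v\}, \qquad X_2 = S\cup\{u,v\}.
\]
Since $u,v$ have no neighbours in $S$, the induced graph $G[X_2]$ equals $G[S] \cup K_2$, which is perfectly $(k-2)$-divisible by the addition lemma and hence perfectly $(k-1)$-divisible, as required by the definition.

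\textbf{The main obstacle} is establishing $\omega(G[X_1]) < \omega(G)$. A clique in $X_1$ contained in $N(u)$ extends via $u$ to a clique of $G$ of one larger size, bounding it by $\omega(G)-1$; the symmetric bound applies to cliques in $N(v)$. The difficult case is a clique in $X_1$ straddling both $N(u)\setminus N(v)$ and $N(v)\setminus N(u)$: any witnesses $b,c$ in these two sets together with $u,v$ induce a $C_4$, and such a clique can attain size $\omega(G)$. A concrete failure of the naive partition is $K_{3,3,3}$, where every choice of edge $uv$ leaves a maximum triangle entirely inside $X_1$.

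To overcome this obstruction I would try the following refinements: (i) choose $uv$ inside a maximum clique $K_0$ of $G$ and add a secondary induction on the number of maximum cliques wholly contained in $X_1$; (ii) enlarge $X_2$ to $S\cup A\cup\{u,v\}$ with $A=N(u)\cap N(v)$, reducing $X_1$ to $B\cup C$, where the straddling cliques are often extendable by a common neighbour in $A$---provided one can prove the required $(k-1)$-divisibility of the augmented $G[X_2]$ via a further decomposition into $Y_1=A$ and $Y_2=S\cup\{u,v\}$ together with a separate argument for induced subgraphs of $G[X_2]$ that omit $u$ or $v$; (iii) induct on $|V(G)|$ instead, so that $G[X_1]$ is already perfectly $k$-divisible by the inductive hypothesis, and then absorb its perfectly $(k-1)$-divisible side into $X_2$, at the cost of having to show that the merged side retains perfect $(k-1)$-divisibility. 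I expect the deepest step to be precisely this clique-number control on $X_1$; its resolution likely requires a structural property of $(H\cup K_2)$-free graphs that forbids the $C_4$-obstruction uniformly across all suitable edge choices.
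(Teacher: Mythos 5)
First, note that the statement you are proving is stated in the paper as Conjecture~\ref{conj2}: the paper offers no proof of it, and explicitly motivates it as an open problem. So the relevant question is whether your attempt closes the gap, and it does not --- for exactly the reason you identify yourself. The sound parts of your argument are the addition lemma (adding a disjoint $K_2$ to a perfectly $m$-divisible graph preserves perfect $m$-divisibility; your induction on $m$ works, since the extra two vertices can always be absorbed into the perfectly $(m-1)$-divisible side, or the whole graph taken as $X_2$ when the rest is stable) and the observation that $G[M(u)\cap M(v)]$ is $H$-free. What fails is the clique-number drop on $X_1=(N(u)\cup N(v))\setminus\{u,v\}$. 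In the proof of Theorem~\ref{thm:hcupk1} this step is free: every clique of $N(v)$ extends by $v$, so $\omega(G[N(v)])\le\omega-1$. With an edge $uv$ there is no analogous universal vertex for $N(u)\cup N(v)$, and a maximum clique can straddle $N(u)\setminus N(v)$ and $N(v)\setminus N(u)$ without being extendable by either endpoint; your $K_{3,3,3}$ example shows this happens for every choice of edge. This is not a technical wrinkle but the essential content of the conjecture: the paper itself points out that the known $\chi$-bounds for $2K_2$-free and $(P_n\cup P_2)$-free graphs rely on an additional structural fact (the set of vertices anticomplete to an edge induces a perfect graph) precisely because the naive neighbourhood partition does not control $\omega(G[X_1])$.

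Your proposed repairs (i)--(iii) are sketches rather than arguments, and each faces a concrete obstacle. In (ii), moving $A=N(u)\cap N(v)$ into $X_2$ does fix the clique count on $X_1=B\cup C$ only in special cases (a clique meeting both $B$ and $C$ still need not extend into $A$, and even when $\omega(G[B\cup C])<\omega$ holds you now must show $G[S\cup A\cup\{u,v\}]$ is perfectly $(k-1)$-divisible, while the hypothesis of Conjecture~\ref{conj2} gives you no structural information about $G[A]$ at all --- $A$ is not anticomplete to any edge you control). In (iii), after splitting $G[X_1]$ by its own $k$-divisibility partition you would need the union of two perfectly $(k-1)$-divisible induced subgraphs with edges between them to remain perfectly $(k-1)$-divisible, which is false in general. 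So the attempt correctly reproduces the easy half of the intended analogy with Theorem~\ref{thm:hcupk1} but does not resolve the step that makes the statement a conjecture.
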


Wagon \cite{WAGON} generalised the $\chi$-binding function for the $2K_2$-free graphs to an $O(\omega^{2p-2})$ $\chi$-binding function for the general class of $pK_2$-free graphs, $p \in \mathbb{N}_{>1}$. Assuming that every $2K_2$-free graph is perfectly divisible and that conjecture \ref{conj2} holds, one can easily prove the following theorem.

\begin{theorem}\label{thm:pk2}
If conjecture \ref{conj2} holds and if every $2K_2$-free graph is perfectly divisible, then every $pK_2$-free graph is perfectly $(2p-2)$-divisible, $p \in \mathbb{N}_{>1}$.
\end{theorem}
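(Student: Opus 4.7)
The plan is to proceed by induction on $p \in \mathbb{N}_{>1}$. For the base case $p=2$, we have $2p-2=2$, so the claim asserts that every $2K_2$-free graph is perfectly $2$-divisible. Since perfect $2$-divisibility coincides with perfect divisibility (as noted after Definition \ref{def:perfectKDiv}), this is exactly the hypothesis we are assuming.

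For the inductive step, suppose that every $pK_2$-free graph is perfectly $(2p-2)$-divisible; we must show that every $(p+1)K_2$-free graph is perfectly $2p$-divisible. The key observation is the identity $(p+1)K_2 = pK_2 \cup K_2$. We therefore apply Conjecture \ref{conj2} with $H = pK_2$ and $k = 2p$: the hypothesis of that conjecture requires every $H$-free graph to be perfectly $(k-2)$-divisible, i.e., every $pK_2$-free graph to be perfectly $(2p-2)$-divisible, which is precisely the induction hypothesis. The conclusion of Conjecture \ref{conj2} then gives that every $(H \cup K_2)$-free graph, i.e., every $(p+1)K_2$-free graph, is perfectly $k$-divisible, i.e., perfectly $2p$-divisible, which is what we wanted.

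I do not anticipate any real obstacle here: once the arithmetic $k-2 = 2p-2$ and $k = 2(p+1)-2$ is observed, the proof is a direct application of Conjecture \ref{conj2}, and the entire content lies in aligning the parameters correctly. The only subtlety is the verification of the base case, which hinges on the convention that perfect $2$-divisibility is the same as perfect divisibility, together with the assumption of the theorem.
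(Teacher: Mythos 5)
Your proof is correct and is essentially the same induction on $p$ as in the paper, merely phrased as a step from $p$ to $p+1$ (with $H = pK_2$, $k=2p$) rather than from $p-1$ to $p$; the parameter alignment and the base case via the identification of perfect $2$-divisibility with perfect divisibility both match the paper's argument.
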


\begin{proof}
Let $G$ be a $pK_2$-free graph, $p \in \mathbb{N}_{>1}$. We prove the claim by induction on $p$. The base case follows from the assumption. So we may assume that $p > 2$ and that every $p'K_2$-free graph with $p' < p$ is perfectly $(2p'-2)$-divisible. Therefore, a $(p-1)K_2$-free graph is perfectly $(2p-4)$-divisible by the induction hypothesis. Since $G$ is $(p-1)K_2 \cup K_2$-free, it follows from conjecture \ref{conj2} that $G$ is perfectly $(2p-2)$-divisible.
\end{proof}

\subsection*{Acknowledgement}

The author would like to express his thanks to the anonymous referee for valuable suggestions.

\end{document}